\let\uml\"
\newtheorem{thm}{Theorem}[section]
\newtheorem{lem}[thm]{Lemma}
\newtheorem{prop}[thm]{Proposition}
\newtheorem{cor}[thm]{Corollary}
\theoremstyle{definition}
\theoremstyle{remark}
\newtheorem{rem}[thm]{Remark}
\numberwithin{equation}{section}
\newcommand{\h}{\mathcal{H}}
\newcommand{\C}{{\mathcal{C}}}
\newcommand{\A}{\mathcal{A}}
\newcommand{\B}{{\mathcal{B}}}
\title{Smooth cohomology of $ C^* $-algebras}
\author[M. Amini]{Massoud Amini}
\address{Department of Mathematics, Tarbiat Modares University, P.O Box 14115-175, Tehran, Iran}
\email{mamini@modares.ac.ir}
\author[A. Shirinkalam]{Ahmad Shirinkalam}
\address{Department of Basic sciences, IAU Central Tehran Branch,    Tehran, Iran} 
\email{shirinkalam\_a@aut.ac.ir}
\keywords{equivariant $L^2$-homology, group action, equivariant bimodule, faithful trace}
\subjclass[2010]{Primary 46L05; Secondary
 	46L57}
\begin{document}

\begin{abstract}
We define a notion of smooth cohomology for $ C^* $-algebras which admit a faithful trace. We show that if $ \A\subseteq B(\h) $ is a $ C^* $-algebra with a faithful normal trace $ \tau $ on   the ultra-weak closure $ \bar{\A} $ of $ \mathcal{A} $, and  $ X $ is a normal dual operatorial $ \bar{\A}$-bimodule, then  the first smooth cohomology  $ 	\mathcal{H}^1_{s}(\mathcal{A},X) $ of $ \mathcal{A} $ is equal to $ \mathcal{H}^1(\mathcal{A},X_{\tau})$, where $ X_{\tau} $ is a closed submodule of $ X $  consisting of smooth elements.
\end{abstract}
\maketitle
\tableofcontents



%
%
%
%

%

\section{Introduction}

Hochschild cohomology  is an important invariant for Banach and operator algebras.
George Elliott used this along with K-theory groups in the  classification of separable AF $C^*$-algebras \cite{el}.
Also, Alain Connes and Uffe Haagerup characterized the injectivity and hyperfiniteness of a von Neumann algebras by the vanishing of its cohomology group over all dual normal modules \cite{c1},\cite{c2},\cite{c3},\cite{h}. Another example is the proof of equivalence of amenability and nuclearity for $ C^* $-algebras,  by Alain Connes (1978) (amenable $ \Rightarrow $ nuclear)  and Uffe Haagerup (1983) (nuclear $ \Rightarrow $ amenable).

A study of cohomology in the algebraic setting was initiated by Hochschild (1945-47) \cite{h1},\cite{h2},\cite{h3}.
After Kaplansky (1953), we know that various questions about the properties of  derivations on $ C^* $-algebras and von Neumann algebras could be translated into  certain cohomology groups being equal to each other (or to zero).
Following R. V. Kadison \cite{k}, S. Sakai (1966) showed that every derivation $ \delta:\mathcal{M}\rightarrow \mathcal{M}  $ on a von Neumann algebra $ \mathcal{M} $ is inner, which is equivalent to vanishing of
 the first continuous cohomology group
 $ \mathcal{H}^1(\mathcal{M},\mathcal{M}) $ \cite{s}. When $ \mathcal{M} $ is faithfully represented on a Hilbert space $ \h $, $ B(\h) $, the space of all bounded linear operators on $\h $, becomes an $ \mathcal{M}$-bimodule and the cohomology groups $ \mathcal{H}^n(\mathcal{M},B(\h)) $ are defined. In all known cases these groups are zero, but in general we do not know what happens.

B. E. Johnson, R.V. Kadison and J. R. Ringrose (1972) showed that if $ \mathcal{M} $ is hyperfinite and
$ X $ is an arbitrary dual normal  $ \mathcal{M} $-bimodule, then $ \mathcal{H}^n(\mathcal{M},X)=0 $ for all $ n>0$ \cite{jkr}.
Later,  E. Christensen, E. G. Effros and A. M. Sinclair (1987) considered complete boundedness of maps and
 applied operator space techniques to cohomology of operator algebras. This method worked perfectly for von Neumann algebras of types $ \mathrm{I},\mathrm{II}_\infty $
and $ \mathrm{III }$ \cite{ces}. Type $ \mathrm{I} $ can be handled by hyperfiniteness results while types $ \mathrm{II}_\infty $ and $ \mathrm{III} $ are stable under tensoring with $ B(\h) $ which is enough to obtain complete boundedness of cocycles, but not all type
$ \mathrm{II}_1 $ factors have this property. Some partial results for $ \mathrm{II}_1 $ algebras obtained by F. Pop and R. R. Smith (1994) \cite{ps}. For example if $ M $ is a separably acting type $ \mathrm{II}_1 $ von Neumann algebra with a Cartan subalgebra, then
$ \mathcal{H}^n(\mathcal{M},\mathcal{M})=0 $ for all $ n>0. $

The  case of $ \mathrm{II}_1 $ factor studied by S. Popa and S. Vaes (2014) (for the continuous
 $ L^2 $-cohomology) \cite{pv} and A. Galatan and Popa (2017) (for factors with some additional conditions) \cite{gp}.

In the latter paper, the authors related the so-called smooth cohomology of a von Neumann algebra with coefficients in a Banach module $X$ with the ordinary cohomology with coefficients in the  smooth part of $X$ (which is a closed submodule of $X$) and  showed that for factors, each derivation  with values in the smooth part is inner.

The main objective of this paper is to handle the same correspondence for $C^* $-algebras.
 Following \cite{gp}, we define a notion of smooth cohomology for a $C^* $-algebra $ \A $ with a faithful  trace.
The main result of the paper asserts that the smooth cohomology of $ \A $ with coefficients in $X$  and the Hochschild cohomology of $ \A $  with coefficients in the smooth part of $X$ are the same.
In order to do this, we show that the smooth weak continuous cocycles on   $ \A $
can be extended to its ultra-weak closure  $ \bar{\A} $, without changing the cohomology groups. The precise statement is as follows:

\begin{thm}\label{t1}
	Let $ \A\subseteq B(\h) $ be a $ C^* $-algebra with a faithful normal trace on the ultra-weak closure   $ \bar{\A} $ of $\A  $ and let $ X $ be a normal dual $ \bar{\A} $-bimodule.
	Then, for every $ n \in  \mathbb{N}$ we have $$ \mathcal{H}^n_{sw}(\mathcal{A},X)=\mathcal{H}^n_{sw}(\bar{\A},X). $$
\end{thm}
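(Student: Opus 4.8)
The plan is to prove the equality through the natural restriction map on cochains together with an inverse given by weak-continuous extension, following the classical strategy that relates the continuous cohomology of a $C^*$-algebra to the normal cohomology of its ultra-weak closure. The two inputs that make this work are Kaplansky's density theorem, which gives an abundance of \emph{bounded} approximating nets in $\A$, and the normality of the dual bimodule $X$ together with the faithful normal trace $\tau$, which will be used to keep track of the weak-continuity and smoothness conditions.

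First I would set up the restriction map. Restriction of an $n$-cochain on $\bar{\A}$ to $\A^n$ commutes with the Hochschild coboundary $\partial$, since $\partial$ is built only from the module actions and the multiplication, all of which restrict; it also carries smooth weak-continuous cochains to smooth weak-continuous cochains. Hence it descends to a homomorphism $\rho^n\colon \mathcal{H}^n_{sw}(\bar{\A},X)\to \mathcal{H}^n_{sw}(\A,X)$, and it remains to produce an inverse.

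The heart of the matter is the construction of an extension map in the opposite direction. Given a bounded $n$-linear map $\phi\colon \A^n\to X$ that is separately weak-continuous, I extend it to $\bar{\A}^n$ one variable at a time: for $b\in\bar{\A}$, Kaplansky's theorem supplies a bounded net $(a_i)$ in $\A$ with $a_i\to b$ ultra-weakly, and boundedness together with separate weak continuity forces $\lim_i\phi(\dots,a_i,\dots)$ to exist in the weak-$*$ topology and to be independent of the net. Normality of $X$ is exactly what guarantees the values stay in $X$ and that the extension is again separately normal, yielding a unique separately weak-continuous bounded extension $\bar\phi$ on $\bar{\A}^n$. Because the left and right actions of $\bar{\A}$ on $X$ are normal, one checks $\partial\bar\phi=\overline{\partial\phi}$, so the extension sends cocycles to cocycles and coboundaries to coboundaries. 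The trace $\tau$ enters in preserving smoothness: the smooth cochains are those continuous for the $\tau$-induced (Sobolev/$L^2$-type) seminorms, and since $\tau$ is normal it is determined by its restriction to $\A$, so a cochain smooth on $\A$ extends to one smooth on $\bar{\A}$. Thus extension induces $\mathcal{H}^n_{sw}(\A,X)\to\mathcal{H}^n_{sw}(\bar{\A},X)$. Finally $\rho^n$ and the extension map are mutually inverse: restricting then extending recovers $\phi$ by uniqueness of the extension, while extending then restricting is the identity because $\A\subseteq\bar{\A}$, giving $\mathcal{H}^n_{sw}(\A,X)=\mathcal{H}^n_{sw}(\bar{\A},X)$.

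I expect the main obstacle to be the multilinear extension in the third step: verifying that separate weak continuity plus boundedness produces a genuinely well-defined, net-independent extension valued in $X$ itself (rather than in some bidual), and that the resulting map is separately normal on $\bar{\A}$ and independent of the order in which the variables are filled in. The secondary delicate point is confirming that the smoothness condition survives the extension, which depends on the precise definition of the smooth seminorms coming from $\tau$; once both are in hand, commutation with $\partial$ and the inverse-map bookkeeping are routine.
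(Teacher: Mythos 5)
Your overall route is the same as the paper's: show that the restriction map on cochains is an isomorphism by producing a unique, norm-preserving, separately UWOT-continuous smooth extension of each cochain from $\A^n$ to $\bar{\A}^n$, built one variable at a time via Kaplansky density, with the $X$-valued case reduced to the scalar case through the predual $X_*$ (the paper's Lemma \ref{L3}). The smoothness worry you raise at the end is actually the easy part: on bounded sets the $\Arrowvert\cdot\Arrowvert_\tau$-topology coincides with UWOT (see \cite[III.2.2.17]{bl}), so any UWOT-continuous extension of a bounded map is automatically smooth; no separate argument about $\tau$ being determined on $\A$ is needed.

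The genuine gap is exactly the point you flag as ``the main obstacle'' and then do not resolve: after extending in the $k$-th variable, why is the resulting map still separately UWOT-continuous in the \emph{other} variables when the $k$-th entry lies in $\bar{\A}\setminus\A$? For fixed $b\in\bar{\A}\setminus\A$, the functional $a_j\mapsto\bar\varphi(\dots,b,\dots,a_j,\dots)$ is only a pointwise limit of UWOT-continuous functionals along a Kaplansky net in the $k$-th slot, and pointwise limits of normal functionals need not be normal; boundedness alone does not rescue this. The paper devotes Lemma \ref{L2} to precisely this step: it views $b\mapsto\varphi(\cdot,b)$ as a bounded map into the predual $(\bar{\A})_*$, invokes the result of Akemann/Takesaki (\cite[Corollary II.9]{ak}, \cite[Theorem 5.4]{tak}) that the image of the unit ball is relatively $\sigma((\bar{\A})_*,\bar{\A})$-compact, concludes that on this set the topologies $\sigma((\bar{\A})_*,\A)$ and $\sigma((\bar{\A})_*,\bar{\A})$ coincide, and thereby obtains continuity of $b\mapsto\varphi(\cdot,b)$ into $(\bar{\A})_*$ with its full weak topology --- which is what makes the extended form separately UWOT-continuous in both arguments. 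Without this weak-compactness input (or an equivalent), your extension step does not go through and the inverse to the restriction map is never constructed. The net-independence and order-independence you also worry about then follow from uniqueness of separately UWOT-continuous extensions off the UWOT-dense subalgebra $\A$, which is routine once separate continuity is secured.
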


The key point here is that every smooth map on $ \A $ can be extended to $ \bar{\A} $. This will be checked in Lemma \ref{L3}.
 Then, using Proposition \ref{p}  and Lemma \ref{5},  we show that the smooth normal cohomology of $ \A $ coincides with the smooth cohomology of $ \A $:

\begin{thm}\label{t2}
	Let $ \A\subseteq B(\h) $ be a $ C^* $-algebra with a faithful normal trace on   $ \bar{\A} $ and let $ X $ be a normal dual $ \bar{\A} $-bimodule. Then, for every $ n \in  \mathbb{N}$ we have
	$$ \mathcal{H}^n_{sw}(\mathcal{A},X)=\mathcal{H}^n_{s}(\A,X). $$
\end{thm}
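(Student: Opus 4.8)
The plan is to realize the equality as an isomorphism induced by the obvious inclusion of cochain complexes, and then to invert it by a trace-averaging (normalization) procedure. Every smooth weakly continuous (normal) cochain is in particular a smooth cochain, so the smooth normal complex $(C^n_{sw}(\A,X),\partial)$ sits inside the smooth complex $(C^n_s(\A,X),\partial)$ as a subcomplex, and the inclusion $\iota$ induces a natural map $\iota^*\colon \mathcal{H}^n_{sw}(\A,X)\to \mathcal{H}^n_s(\A,X)$. I would prove $\iota^*$ is bijective by producing an operator $\Phi\colon C^n_s(\A,X)\to C^n_{sw}(\A,X)$ that retracts the smooth complex onto its normal subcomplex: concretely, $\Phi$ should commute with $\partial$, restrict to the identity on $C^n_{sw}(\A,X)$, and be chain-homotopic to the identity of $C^n_s(\A,X)$.

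The operator $\Phi$ is exactly what I would extract from Proposition \ref{p}: using the faithful normal trace $\tau$ on $\bar{\A}$ one averages a smooth cochain against the $L^2$-structure attached to $\tau$ and passes to a weak-$*$ limit in the dual module $X$, the smoothness hypothesis being precisely what controls these averages in the $\|\cdot\|_2$-seminorm so that the limit exists and is separately weak-$*$ continuous. Granting that $\Phi$ lands in $C^n_{sw}(\A,X)$, fixes normal cochains, and commutes with $\partial$, I would then invoke Lemma \ref{5} to supply the homotopy, i.e. maps $s$ with $\partial s+s\partial=\mathrm{id}-\Phi$ on $C^*_s(\A,X)$. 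For surjectivity of $\iota^*$: given a smooth cocycle $\phi$, the cochain $\Phi(\phi)$ is a normal cocycle and $\phi-\Phi(\phi)=\partial(s\phi)$ with $s\phi$ smooth, so $[\phi]=\iota^*[\Phi(\phi)]$. For injectivity: if a normal cocycle $\psi$ is a smooth coboundary $\partial\eta$ with $\eta$ merely smooth, then $\psi=\Phi(\psi)=\Phi(\partial\eta)=\partial(\Phi\eta)$ with $\Phi\eta$ normal, so $\psi$ is already a coboundary in $C^*_{sw}(\A,X)$. Together these show $\iota^*$ is an isomorphism in every degree $n$.

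The step I expect to be the genuine obstacle is verifying that the averaged cochain $\Phi(\phi)$ is honestly normal (separately weak-$*$ continuous) while staying smooth, since weak-$*$ limits of continuous cochains are not continuous in general. This is where the full force of the $L^2$-smoothness is needed, together with the extendability of smooth maps to $\bar{\A}$ from Lemma \ref{L3} and, if a passage to the von Neumann closure is used, the identification $\mathcal{H}^n_{sw}(\A,X)=\mathcal{H}^n_{sw}(\bar{\A},X)$ of Theorem \ref{t1}. A secondary but unavoidable bookkeeping task is confirming that $\Phi$ commutes with $\partial$ and fixes $C^n_{sw}(\A,X)$; this reduces to the module-action identities for the smooth part $X_\tau$ and, being compatible with the coboundary formula, is routine once the normality of $\Phi(\phi)$ is in hand.
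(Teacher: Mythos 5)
Your proposal follows essentially the same route as the paper: the inclusion of the normal subcomplex induces the map on cohomology, the projection $S_nT_n$ of Proposition \ref{p}(iii) plays the role of your $\Phi$ in the injectivity argument (a normal cocycle that is a smooth coboundary $\delta^{n-1}_s\psi$ equals $\delta^{n-1}_{sw}S_{n-1}T_{n-1}\psi$), and Lemma \ref{5} supplies the coboundary correction giving surjectivity. The one caveat is that Lemma \ref{5} does not provide a full chain homotopy $\partial s+s\partial=\mathrm{id}-\Phi$, only the statement on cocycles (namely that $\varphi-\delta^{n-1}_sJ_n\varphi$ is normal, without identifying it with $S_nT_n\varphi$), but that weaker statement is all your argument actually uses.
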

This is done by the averaging techniques described in \cite[Section 3]{SS}.
The averaging technique used here is to integrate over the compact unitary group of a finite dimensional $ C^* $-algebra. Taking suitable weak limits as the finite dimensional algebras increase in size leads to averages
that are essentially over infinite dimensional algebras. This method described in an abstract setting by Johnson, Kadison and Ringrose in \cite{jkr}.

	Combining Theorem \ref{t1} with Theorem \ref{t2}, we deduce the following equality (Corollary \ref{c1}):
	$$ \mathcal{H}^n_{s}(\mathcal{A},X)=\mathcal{H}^n_{sw}(\bar{\A},X). $$

In the case when $ X $ is a normal dual operatorial $ \bar{\A} $-bimodule (in the sense of \cite{gp}), we get the main result of the paper:
\begin{thm} \label{main}
	Let $ \A\subseteq B(\h) $ be a $ C^* $-algebra with a faithful normal trace $ \tau $ on   $ \bar{\A} $, the ultra-weak closure of $ \mathcal{A} $, and let $ X $ be a normal dual operatorial $ \bar{\A} $-bimodule. Then,
	$ 	\mathcal{H}^1_{s}(\mathcal{A},X)=\mathcal{H}^1(\mathcal{A} ,X_{\tau}). $
	
\end{thm}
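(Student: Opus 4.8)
The plan is to derive the statement by combining Corollary \ref{c1} with an identification, at the level of $1$-cocycles and $1$-coboundaries, of the smooth data on $\bar{\A}$ with ordinary data valued in the smooth part $X_{\tau}$, and then to transport the result back from $\bar{\A}$ to $\A$. Since only degree $1$ is claimed, I would work throughout with derivations and inner derivations rather than the full cochain complex, which keeps the bookkeeping minimal.

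First I would invoke Corollary \ref{c1} to write $\mathcal{H}^1_{s}(\A,X)=\mathcal{H}^1_{sw}(\bar{\A},X)$, so that it remains to prove $\mathcal{H}^1_{sw}(\bar{\A},X)=\mathcal{H}^1(\A,X_{\tau})$. I would then record two structural facts about the smooth part. Because $X$ is a \emph{normal dual operatorial} $\bar{\A}$-bimodule, $X_{\tau}$ is a norm-closed $\bar{\A}$-subbimodule which, as in \cite{gp}, again carries a dual normal structure; moreover the smooth $0$-cochains are by definition exactly the elements of $X_{\tau}$. Consequently the smooth coboundaries coincide with $\{\mathrm{ad}_{\xi}:\xi\in X_{\tau}\}=B^1(\bar{\A},X_{\tau})$, which already matches the coboundaries on the right-hand side.

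The heart of the argument is the identification of the $1$-cocycles. Here I would show that a smooth normal derivation $D\colon\bar{\A}\to X$ automatically takes its values in $X_{\tau}$: the defining $L^2$-continuity estimate for a smooth cochain, combined with the operatorial structure and the faithful normal trace $\tau$, forces each $D(a)$ to be a smooth element, so that $D$ is in fact a bounded normal derivation $\bar{\A}\to X_{\tau}$. The reverse inclusion is the routine observation that any bounded derivation into the smooth submodule $X_{\tau}$ satisfies the smoothness estimate. This yields an equality of cocycle spaces, and together with the coboundary computation above gives $\mathcal{H}^1_{sw}(\bar{\A},X)=\mathcal{H}^1(\bar{\A},X_{\tau})$, where normal and continuous cohomology of the von Neumann algebra $\bar{\A}$ agree. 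Finally, since $\A$ is ultra-weakly dense in $\bar{\A}$ and $X_{\tau}$ is a dual normal bimodule, I would apply the extension mechanism of Lemma \ref{L3} (already used in the proof of Theorem \ref{t1}) to conclude that every continuous derivation $\A\to X_{\tau}$ extends uniquely to a normal derivation $\bar{\A}\to X_{\tau}$ and that inner derivations correspond, so that $\mathcal{H}^1(\bar{\A},X_{\tau})=\mathcal{H}^1(\A,X_{\tau})$. Chaining the equalities proves the theorem.

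I expect the main obstacle to be the claim that a smooth derivation lands in $X_{\tau}$: this is precisely where the operatorial hypothesis on $X$ must be used, since one has to convert the $L^2$-type continuity of $D$ into membership of every $D(a)$ in the smooth part, uniformly enough to conclude that $D$ is a genuine $X_{\tau}$-valued bounded derivation. A secondary technical point is verifying that the Hochschild coboundary really maps smooth cochains to smooth cochains in degrees $0$ and $1$, so that $\mathcal{H}^1_{sw}(\bar{\A},X)$ is well defined and the identifications above descend compatibly to the quotients.
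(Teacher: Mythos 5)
Your opening move coincides with the paper's: both reduce via Corollary \ref{c1} to computing $\mathcal{H}^1_{sw}(\bar{\A},X)$. From there the paper simply quotes two external results --- the Galatan--Popa identity $\mathcal{H}^1_{sw}(\bar{\A},X)=\mathcal{H}^1_{w}(\bar{\A},X_{\tau})$ (equation (\ref{popa2}), obtained from \cite[Proposition 2.2]{gp} and \cite[Theorem 3.5]{gp}) and the reduction $\mathcal{H}^1_{w}(\bar{\A},X_{\tau})=\mathcal{H}^1(\A,X_{\tau})$ of \cite[Theorem 3.3.1]{SS} --- whereas you propose to reprove both, and each of your two sketches has a genuine gap. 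You locate the difficulty in the wrong place: the claim that a smooth derivation $D$ lands in $X_{\tau}$ is the \emph{easy} direction and does not need the operatorial hypothesis at all. Indeed, from $D(a)b=D(ab)-aD(b)$, $bD(a)=D(ba)-D(b)a$ and the estimates $\|ab\|_{\tau}\le\|a\|\,\|b\|_{\tau}$, $\|ba\|_{\tau}\le\|a\|\,\|b\|_{\tau}$ (the second using traciality), smoothness of $D$ immediately gives smoothness of $b\mapsto D(a)b$ and $b\mapsto bD(a)$. What you dismiss as ``the routine observation'' --- that every bounded derivation $\bar{\A}\to X_{\tau}$ is smooth, or at least cohomologous to a smooth one, so that the inclusion of cocycle spaces is surjective on cohomology --- is precisely the substantive content of \cite[Theorem 3.5]{gp}; it is there, not in the forward direction, that the operatorial condition is actually needed (to get control of $\|p\cdot D(p)\cdot(1-p)\|$ uniformly over projections $p$ of small trace, since the smoothness of the individual elements $D(p)$ gives no uniformity). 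As written, your argument yields only an injection $\mathcal{H}^1_{sw}(\bar{\A},X)\hookrightarrow\mathcal{H}^1(\bar{\A},X_{\tau})$.

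The final descent $\mathcal{H}^1(\bar{\A},X_{\tau})=\mathcal{H}^1(\A,X_{\tau})$ also cannot be extracted from Lemma \ref{L3}: that lemma extends maps which are already separately UWOT-(weak*-)continuous, whereas an arbitrary bounded derivation $\A\to X_{\tau}$ is merely norm-continuous and has no reason to extend to $\bar{\A}$. One must first replace it by a cohomologous normal derivation, which is the Johnson--Kadison--Ringrose averaging theorem (\cite[Theorem 3.3.1]{SS}) that the paper invokes, not an extension-by-density argument. Neither gap destroys the overall strategy --- your skeleton is the paper's own three-step chain --- but the two steps you treat as routine are exactly the cited theorems whose proofs carry the real content, so the proposal does not stand as an independent argument.
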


An example of a normal dual operatorial $ \bar{\A} $-bimodule is  $ B(\h) $, the space of all bounded linear operators on a Hilbert space $\h  $ on which $ \A $ is represented. The smooth part of this module is a hereditary $ C^* $-subalgebra of $ B(\h) $ that contains the space of compact operators $ K(\h) $ and a large variety of non-compact smooth elements in general \cite{gp}.

\section{Preliminaries}

Throughout the paper, $ \A_1 $ denotes the closed unit ball of a $C^*$-algebra $ \A $. Also,
the weak (respectively, strong and ultra-weak) operator topology on $ B(\h) $ is denoted by WOT (respectively, SOT  and UWOT).

Let $\A$ be a unital $C^*$-algebra. A positive linear functional $ \tau $ on $\A$ is called
tracial (or a finite trace) if $ \tau(ab)=\tau(ba) $ for all $a,b \in \A$.
A trace on $\A$ is called faithful if $a=0$, whenever $\tau(a^*a)=0$ for every $a\in \A$.
Each faithful trace on $\A$ induces a norm $ \Arrowvert . \Arrowvert_{\tau} $ on $\A$  defined by $\Arrowvert a \Arrowvert^2_{\tau}=\tau(a^*a),\ (a\in \A)$.

Let $\A$ be a  $C^*$-algebra with a  faithful trace $ \tau $ and let $ B $ be a Banach space.
An $ n $-linear map $ T:\A \rightarrow B $ is called \textit{smooth} if it is  continuous relative to the $ \Arrowvert . \Arrowvert_{\tau} $-topology on $\A_1$ and the norm topology on $ B $. A multi-linear map is smooth if it is smooth, separately in each argument.

Let $X$ be a Banach $\A$-bimodule. An element $x\in X$ is called \textit{smooth} if the module maps $\A \rightarrow X;a \mapsto a\cdot x$ and $a\mapsto x\cdot a$ are smooth.
We denoted by $X_\tau$ the closed submodule of all smooth elements in $X$. If $\B$ is a $C^*$-subalgebra of $\A$, then we have $X^A_\tau \subseteq  X^B_\tau$ \cite{gp}.

The Banach $\A$-bimodule  $ X $ is said to be {\it dual} if it is the dual of a Banach space and for each $ a\in \A $, the  maps $X \rightarrow X; x\mapsto a\cdot x  $ and $ x\mapsto x\cdot a  $
are weak* continuous. If in addition, $ \A $ admits a weak* topology (for example whenever $ \A $ is a von Neumann algebra), and for every $ x\in X $ the  maps $\A  \rightarrow X; a\mapsto a\cdot x  $ and $ a\mapsto x\cdot a  $ are weak* continuous, then $ X $ is said to be {\it normal}.

We put $ BL^0(\A,X)=X $, and for each $ n \in \mathbb{N} $, we denote by $ BL^n(\A,X) $ the space of all bounded $ n $-linear maps from $ \A^n $ into $ X. $ The subscript ``$ s $" (respectively, ``$ sw $") means that the maps are smooth (respectively, smooth and separately UWOT-continuous). Let $ \B $ be a subalgebra of $ \A $. An element $ T $ of $  BL^n(\A,X)  $  is called $ \B $-{\it modular}  if for each $ a_1,..., a_{n} \in  \A $ and $ b \in \B $ we have
\begin{center}
	$b\cdot T(a_1,..., a_{n})=T(ba_1,..., a_{n}),$
	$ T(a_1,...,a_jb,a_{j+1},..., a_{n})=T(a_1,...,a_j,ba_{j+1},..., a_{n}), $
	$ T(a_1,..., a_{n}b)=T(ba_1,..., a_{n})b. $
\end{center}
The space of all the $ \B $-modular maps  is denoted by $ BL^n(\A,X:\B) $.

For each $ n>0 $,  the coboundary operators $$ \delta^{n}: BL^n(\A,X) \rightarrow BL^{n+1}(\A,X)$$ are defined by
\begin{align*}
(\delta^nT)(a_1,..., a_{n+1})
&:=a_1\cdot T(a_2,..., a_{n+1})\\
&+\sum_{k=1}^{n}(-1)^kT(a_1,..., a_ka_{k+1},..., a_{n+1})\\
&+(-1)^{n+1}T(a_1,..., a_{n})\cdot a_{n+1}, \quad (a_1,..., a_{n+1}\in \mathcal{A}),
\end{align*}
 and $\delta^0:X\to BL(\mathcal{A},X)$ is defined by $\delta^0(x)(a)=
a\cdot x-x\cdot a$. We have
the cochain complex
$$\{0\}\rightarrow X\xrightarrow{\delta^{0}} BL(\A,X)\cdot \cdot \cdot \xrightarrow{\delta^{n-1}}BL^n(\A,X) \xrightarrow{\delta^{n}}BL^{n+1}(\A,X)\xrightarrow{\delta^{n+1}}\cdot \cdot \cdot,  $$
 called the Hochschild cochain complex.

 Letting $ \mathcal{Z}^n(\mathcal{A},X) =\ker \delta^{n} $ and $\mathcal{B}^n(\mathcal{A},X)= \text{ran} \:\delta^{n} $,
 we  have the quotient linear space
  $$\mathcal{H}^n(\mathcal{A},X):=\mathcal{Z}^n(\mathcal{A},X)/\mathcal{B}^n(\mathcal{A},X),\; \mathcal{H}^0(\mathcal{A},X)=\{x\in X:a\cdot x=x \cdot a (a\in\mathcal{A})\},$$ called the $ n$-th Hochschild cohomology of 	$\mathcal{A}$
with coefficients in $X$.

Following \cite{gp} and \cite{SS},
we may use the  subscripts ``$ s $" and ``$ sw $" in $ BL^{n}_{s}(\A,X) $ and $ BL^{n}_{sw}(\A,X) $). For example,
  $ \mathcal{Z}_{s}^1(\mathcal{A},X) $ is the space of smooth derivations on $ \A $ to $ X $ and $ \mathcal{B}_{s}^1(\mathcal{A},X) $ is the space of inner derivations that is implemented by a smooth element of $ X $.

\section{Smooth cohomology }
In this section we  explore the relation between  $H^1_{s}(\A,X)$ and $H^1(\A,X_{\tau})$.
Let $ \A\subseteq B(\h) $ be a $ C^* $-algebra with a faithful normal trace on $ \A^{\prime\prime} $. By \cite[Theorem 1.2.4]{ren}, on a bounded ball of $ \A$, the WOT, SOT and  UWOT agree. Also, the $ \Arrowvert . \Arrowvert_{\tau} $-topology agrees with  SOT (and also with UWOT) on any bounded subset of $ \A $ by \cite[III. 2.2.17]{bl}. In particular, a bounded net $ (a_i)\subseteq \A $ converges to zero strongly if and only if $ \Arrowvert a_i \Arrowvert_{\tau}\rightarrow 0 $. We use this facts several times. The  results of this section  adapt ideas and techniques from \cite{SS}.

\begin{lem}\label{L2}
	Let $ \A $ and $\B$ be two $ C^* $-subalgebras of $ B(\h) $ and let $ \tau $ be a faithful normal trace on the von Neumann algebra generated by  $ \A $ and $ \B $. Let $\varphi:\A \times\B \rightarrow \mathbb{C} $ be a bounded bilinear smooth form which is separately UWOT-continuous. Then $ \varphi $ extends uniquely to a separately UWOT-continuous, smooth bilinear form $\bar{\varphi}:\bar{\A} \times\B \rightarrow \mathbb{C} $, where $ \bar{\A} $ is the UWOT-closure of $ \A $.
\end{lem}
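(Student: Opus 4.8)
The plan is to extend $\varphi$ in its first variable, from $\A$ to $\bar{\A}$, and then check that each defining property survives. First I would fix $b\in\B$ and extend the functional $\varphi(\cdot,b)$. Since $\varphi$ is separately UWOT-continuous, $\varphi(\cdot,b)$ is a bounded, UWOT-continuous linear functional on the UWOT-dense $C^*$-subalgebra $\A$ of $\bar{\A}$. Every such functional is the restriction of a unique normal functional on $\bar{\A}$: a functional continuous for the ultra-weak topology on $\A$ is the restriction of an element of the predual, and two normal functionals agreeing on the UWOT-dense set $\A$ coincide. Concretely, for $x\in\bar{\A}$ and any bounded net $(a_i)\subseteq\A_1$ with $a_i\to x$ in UWOT (which exists by Kaplansky density), I would set $\bar{\varphi}(x,b):=\lim_i\varphi(a_i,b)$; normality guarantees that the limit exists and is independent of the net. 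Linearity in $x$ is built into the construction, and linearity in $b$ follows from uniqueness of the normal extension. Because $\A_1$ is UWOT-dense, equivalently $\|\cdot\|_\tau$-dense, in $\bar{\A}_1$, one obtains $\|\bar{\varphi}\|=\|\varphi\|$, so $\bar{\varphi}$ is a bounded bilinear form.

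By construction each $\bar{\varphi}(\cdot,b)$ is normal on $\bar{\A}$, and since the UWOT, SOT and $\|\cdot\|_\tau$-topologies all agree on bounded subsets, $\bar{\varphi}(\cdot,b)$ is $\|\cdot\|_\tau$-continuous on $\bar{\A}_1$; thus $\bar{\varphi}$ is already UWOT-continuous and smooth in its first variable, with no further work.

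The main obstacle is the second variable: for fixed $x\in\bar{\A}$ I must show that $b\mapsto\bar{\varphi}(x,b)$ is UWOT-continuous, equivalently $\|\cdot\|_\tau$-continuous on $\B_1$. The difficulty is that $\bar{\varphi}(x,\cdot)$ is merely a pointwise limit of the normal functionals $\varphi(a_i,\cdot)$, and a pointwise limit of normal functionals need not be normal. This is precisely where the smoothness hypothesis does the work, by supplying the missing uniformity. I would argue that the family $\{\varphi(\cdot,b):b\in\B_1\}$ is $\|\cdot\|_\tau$-equicontinuous on $\A_1$, and that by Kaplansky density this equicontinuity transfers to $\{\bar{\varphi}(\cdot,b):b\in\B_1\}$ on $\bar{\A}_1$. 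Consequently $\sup_{b\in\B_1}|\varphi(a_i,b)-\bar{\varphi}(x,b)|=\sup_{b\in\B_1}|\bar{\varphi}(a_i-x,b)|\to 0$, that is, $\varphi(a_i,\cdot)\to\bar{\varphi}(x,\cdot)$ uniformly on $\B_1$. A uniform limit of the $\|\cdot\|_\tau$-continuous maps $\varphi(a_i,\cdot)$ is again $\|\cdot\|_\tau$-continuous, so $\bar{\varphi}(x,\cdot)$ is smooth on $\B_1$; since $\|\cdot\|_\tau$ coincides with UWOT there, it is UWOT-continuous, and rescaling yields separate UWOT-continuity on all of $\B$.

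Finally, uniqueness is immediate: any separately UWOT-continuous extension agrees with $\varphi$ on $\A\times\B$, and two forms that are UWOT-continuous in the first variable and agree on the UWOT-dense set $\A$ must agree on $\bar{\A}\times\B$. I expect the genuine content to lie entirely in the equicontinuity step; everything else is bookkeeping resting on Kaplansky density and the coincidence of the operator topologies with $\|\cdot\|_\tau$ on bounded sets.
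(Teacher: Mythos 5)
Your first-variable extension (extend each $\varphi(\cdot,b)$ to a normal functional $\widetilde{\varphi_b}$ on $\bar{\A}$ via UWOT-continuity and Kaplansky density, then observe that normality plus the coincidence of UWOT with $\Arrowvert \cdot \Arrowvert_\tau$ on bounded sets gives smoothness in the first slot for free) is exactly what the paper does, and that part is fine. You have also correctly located the real difficulty: continuity of $b\mapsto\bar{\varphi}(x,b)$ for $x\in\bar{\A}\setminus\A$.

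The gap is that your resolution of that difficulty --- the claim that $\{\varphi(\cdot,b):b\in\B_1\}$ is $\Arrowvert\cdot\Arrowvert_\tau$-equicontinuous on $\A_1$ --- is asserted, not proved, and it does not follow from the hypotheses by any routine argument. The hypotheses give you \emph{separate} smoothness and a uniform bound $|\varphi(a,b)|\le\Arrowvert\varphi\Arrowvert$ on $\A_1\times\B_1$; equicontinuity of the family is a uniform-in-$b$ joint-continuity statement at $(0,b)$, which is strictly stronger than the separate continuity you are given and is in fact at least as strong as the conclusion you are trying to reach (it would yield joint continuity on balls, which the lemma does not even claim). A Baire-category route to upgrading separate to joint continuity is also blocked, since $\A_1$ with the $\Arrowvert\cdot\Arrowvert_\tau$-metric is not complete unless $\A$ is already UWOT-closed. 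The paper supplies the missing uniformity from a genuinely nontrivial external input: the map $\widetilde{\varphi}:\B\to(\bar{\A})_*$, $b\mapsto\widetilde{\varphi_b}$, sends $\B_1$ to a relatively $\sigma((\bar{\A})_*,\bar{\A})$-compact subset of the predual (Akemann's theorem on the dual of an operator algebra, \cite[Corollary II.9]{ak}, or \cite[Theorem 5.4]{tak}); on such a set the topology $\sigma((\bar{\A})_*,\bar{\A})$ coincides with the coarser $\sigma((\bar{\A})_*,\A)$, for which continuity of $\widetilde{\varphi}$ is immediate from the separate UWOT-continuity of $\varphi$. Evaluating at a fixed $x\in\bar{\A}$ then gives UWOT-continuity in $b$. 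Some input of this weak-compactness type (or a Grothendieck--Haagerup-type factorization) is needed; without it your equicontinuity step is a restatement of the problem rather than a solution.
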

\begin{proof}
For a fixed $ b\in \B $, the bounded linear functional $ \varphi_b(a):=\varphi (a,b)$ is smooth and UWOT-continuous, so it extends to an UWOT-continuous linear functional $ \widetilde{\varphi_b} :\bar{\A}  \rightarrow \mathbb{C}$. Kaplansky density theorem implies that $ \Arrowvert  \widetilde{\varphi_b} \Arrowvert = \Arrowvert \varphi_b  \Arrowvert. $ Hence, the map
$ \widetilde{\varphi}:\B\rightarrow(\bar{\A})_*;\ b\mapsto \widetilde{\varphi_b} $
is linear and bounded with $ \Arrowvert \widetilde{\varphi}   \Arrowvert \leq \Arrowvert  \varphi  \Arrowvert $. Since $ \varphi $ is UWOT-continuous in the second argument, $ \widetilde{\varphi} $ is  continuous in UWOT on $\B$ and in
$ \sigma((\bar{\A})_*,\A ) $ on $ (\bar{\A})_* $. By \cite[Theorem 5.4]{tak} or \cite[Corollary II.9]{ak}, $ \widetilde{\varphi}(\B_1) $ is relatively $ \sigma((\bar{\A})_*,\bar{\A} ) $-compact in
$ (\bar{\A})_* $, hence  $ \sigma((\bar{\A})_*,\bar{\A} ) $  coincides with the coarser topology $ \sigma((\bar{\A})_*,\A ) $. Combining this with the continuity of $ \widetilde{\varphi} $   yields that $ \widetilde{\varphi} $ is continuous on $ \B_1 $ in UWOT into $ (\bar{\A})_* $ in $ \sigma((\bar{\A})_*,\bar{\A} )  $.
Thus, for each fixed $ a \in \bar{\A}  $, the linear functional $ b \mapsto \widetilde{\varphi_b}(a) $ is UWOT-continuous on $ \B_1 $ and hence, on $ \B. $ Now the bounded bilinear form $\bar{\varphi}:\bar{\A} \times\B \rightarrow \mathbb{C} $ defined by $ \bar{\varphi}(a,b)= \widetilde{\varphi_b}(a) $ is separately UWOT-continuous. It remains to show that $ \bar{\varphi} $ is smooth. The $ \Arrowvert . \Arrowvert_{\tau} $-continuity of $ \bar{\varphi} $ on $ \B_1 $ follows from the continuity of $ \varphi $. For the first argument of $\bar{\varphi}  $, it is enough to show that $ \widetilde{\varphi_b} :\bar{\A}  \rightarrow \mathbb{C}$ is smooth. Since  $ \widetilde{\varphi_b} $ is UWOT-continuous on $ (\bar{\A})_1 $,  it is also $ \Arrowvert . \Arrowvert_{\tau} $-continuous.
\end{proof}
\begin{lem}\label{L3}
	Let $ \A\subseteq B(\h) $ be a $ C^* $-algebra with a faithful normal trace on $ \A^{\prime\prime} $ and let $ X $ be a dual module with predual $ X_*. $ If
	$ \varphi:\A \times \A \times... \times \A \rightarrow X $  is a bounded $ n $-linear smooth map which is separately UWOT-weak*-continuous, then it extends  uniquely (without change of norm) to a separately UWOT-continuous, smooth $ n $-linear map
	$ \bar{\varphi}:\bar{\A} \times \bar{\A}\times ... \times \bar{\A} \rightarrow X $.
\end{lem}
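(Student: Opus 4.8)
The plan is to reduce this $X$-valued $n$-linear statement to the scalar bilinear Lemma~\ref{L2} by dualizing the target and peeling off one variable at a time, and then to reassemble the pieces into a single $X$-valued map. First I would use the predual to linearize: for each $\psi\in X_*$ put $\varphi_\psi(a_1,\dots,a_n)=\langle\varphi(a_1,\dots,a_n),\psi\rangle$. Since $\varphi$ is separately UWOT--weak*-continuous and smooth, each $\varphi_\psi$ is a bounded, separately UWOT-continuous, smooth scalar $n$-linear form with $\|\varphi_\psi\|\le\|\varphi\|\,\|\psi\|$.

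Next I would extend each $\varphi_\psi$ from $\A^n$ to $\bar{\A}^n$ one variable at a time. Freezing all but two of the arguments yields a bounded, separately UWOT-continuous, smooth bilinear form; when the frozen companion still lies in $\A$ one applies Lemma~\ref{L2} directly, and when it has already been promoted to $\bar{\A}$ one applies Lemma~\ref{L2} with the (already von Neumann) algebra $\bar{\A}$ in the frozen slot, the trace on the generated algebra being the restriction of $\tau$ to $\bar{\A}=\A''$. Each application extends the active variable while preserving separate UWOT-continuity in the companion, so iterating over all variables produces $\overline{\varphi_\psi}:\bar{\A}^n\to\mathbb{C}$, separately UWOT-continuous (hence smooth, being scalar-valued) and of the same norm; uniqueness of normal extensions makes $\overline{\varphi_\psi}$ independent of the order of extension and linear in $\psi$. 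I then reassemble: for fixed $a_1,\dots,a_n\in\bar{\A}$ the map $\psi\mapsto\overline{\varphi_\psi}(a_1,\dots,a_n)$ is linear and bounded by $\|\varphi\|\prod_j\|a_j\|\,\|\psi\|$, so it defines an element $\bar{\varphi}(a_1,\dots,a_n)\in(X_*)^*=X$. The resulting $\bar{\varphi}$ is $n$-linear with $\|\bar{\varphi}\|=\|\varphi\|$, and is separately UWOT--weak*-continuous because weak*-continuity is tested against each $\psi$. Uniqueness of $\bar{\varphi}$ follows from UWOT-density of $\A$ in $\bar{\A}$ together with separate UWOT--weak*-continuity.

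The one genuinely new point--and the step I expect to be the main obstacle--is smoothness of the $X$-valued extension, since, unlike in the scalar Lemma~\ref{L2}, UWOT--weak*-continuity into $X$ does not by itself force $\Arrowvert\cdot\Arrowvert_\tau$-to-norm continuity (the weak* and norm topologies of $X$ differ). For the \emph{active} variable this is harmless: a smooth linear map $D$ is automatically uniformly $\Arrowvert\cdot\Arrowvert_\tau$-continuous on $\A_1$, because for $a,a'\in\A_1$ one has $\tfrac{1}{2}(a-a')\in\A_1$ and $\|Da-Da'\|=2\,\|D(\tfrac{1}{2}(a-a'))\|$, so continuity at $0$ upgrades to a uniform modulus; hence $D$ extends by uniform continuity to the $\Arrowvert\cdot\Arrowvert_\tau$-closure of $\A_1$, which contains $\bar{\A}_1$ by Kaplansky density, and this norm-limit extension coincides with the weak*-extension because a norm limit is a weak* limit.

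The hard part will be \emph{propagating} smoothness to the variables that have already been promoted to $\bar{\A}$. Fixing such a variable $a\in\bar{\A}_1$ and writing $a$ as a $\Arrowvert\cdot\Arrowvert_\tau$-limit of $a_i\in\A_1$, one is led, after pairing with $\psi$ and using that Kaplansky density preserves functional norms, to the estimate $\|\bar{\varphi}(\dots,a,\dots,b,\dots)\|\le\|a\|\cdot\sup_{a'\in\A_1}\|\varphi(\dots,a',\dots,b,\dots)\|$. Thus smoothness of the extension in the remaining variable $b$ reduces to the uniform smallness $\sup_{a'\in\A_1}\|\varphi(\dots,a',\dots,b,\dots)\|\to0$ as $\Arrowvert b\Arrowvert_\tau\to0$ in $\A_1$, i.e.\ to an equicontinuity of the slice maps over the unit ball that is not a formal consequence of separate smoothness. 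I would establish this equicontinuity from the boundedness of $\varphi$ together with the compact nature of smooth maps in the sense of \cite{gp}: smooth linear maps carry bounded $\Arrowvert\cdot\Arrowvert_\tau$-convergent nets to norm-convergent nets, and this collective compactness upgrades the pointwise (separate) smoothness to the uniform smoothness required above, thereby closing the argument and yielding that $\bar{\varphi}$ is smooth in every variable.
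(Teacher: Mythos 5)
Your construction of $\bar{\varphi}$ is essentially the paper's: reduce to scalar forms via the predual, extend one variable at a time using Kaplansky density and Lemma~\ref{L2} (taking $\B=\bar{\A}$ for slots already promoted), and reassemble in $X=(X_*)^*$; the uniqueness, norm-preservation and separate UWOT--weak*-continuity arguments all match the paper's Cases 1 and 2. The point you single out as the main obstacle is real: smoothness of the $X$-valued extension is a \emph{norm}-continuity statement into $X$, and pairing with each $\psi\in X_*$ only controls weak*-convergence, so it does not follow formally from smoothness of the scalar forms. Your uniform-continuity argument for the active variable (linearity plus the $\tfrac12(a-a')$ trick upgrades smoothness at $0$ to uniform $\Arrowvert\cdot\Arrowvert_\tau$-continuity on $\A_1$, whence an extension by completeness that must agree with the weak* extension) is correct and is more than the paper spells out.

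The gap is in your treatment of the remaining variables. What you need is the equicontinuity $\sup_{a'\in\A_1}\Arrowvert\varphi(\dots,a',\dots,b,\dots)\Arrowvert\to 0$ as $\Arrowvert b\Arrowvert_\tau\to 0$ in $\A_1$, and the justification offered --- that ``smooth linear maps carry bounded $\Arrowvert\cdot\Arrowvert_\tau$-convergent nets to norm-convergent nets, and this collective compactness upgrades pointwise smoothness to uniform smoothness'' --- merely restates separate smoothness of each slice map $\varphi(\dots,a',\dots,\cdot,\dots)$. A pointwise-bounded family of individually smooth maps need not be equismooth; this is exactly the separate-versus-joint continuity problem for bilinear maps and requires an actual argument (Baire category on the $\Arrowvert\cdot\Arrowvert_\tau$-complete ball, an approximation scheme, or some structural input), none of which is supplied. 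You should know that the paper fares no better here: Case 2 of its proof simply asserts that the smoothness of $\bar{\varphi}$ ``follows from'' that of the scalar extensions $\bar{\rho_\xi}$, which is the same non sequitur you identified. So your proposal reproduces the paper's argument and correctly isolates the one step that needs more than the paper provides, but it does not actually close that step.
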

\begin{proof}
	We give the proof in two cases;
	
	Case 1. Let $ X=\mathbb{C} $. We will construct a finite sequence $ \varphi = \varphi_0, \varphi_1, ..., \varphi_n $ of bounded $ n $-linear functionals with the following properties:
	
	(i) $ \varphi_k :\underbrace{\bar{\A} \times \bar{\A}\times ... \times \bar{\A}}_{k-times} \times \A \times ... \times \A \rightarrow \mathbb{C} $,
	
	(ii) $ \varphi_k  $ extends $ \varphi_{k-1} $ without change of norm,
	
	(iii) $ \varphi_k  $ is separately UWOT-continuous,
	
	(iv) $ \varphi_k  $ is a smooth map.
	
This proves the existence of $ \bar{\varphi}=\varphi_n $. The uniqueness of $ \bar{\varphi} $ follows from the fact that $ \varphi $ is separately UWOT-continuous and $ \A $ is UWOT-dense in $ \bar{\A} $.
	
For $ 1\leq k \leq n $, suppose that $ \varphi_0, \varphi_1, ..., \varphi_{k-1} $ have been constructed. For $ j \neq k $ let $ a_j\in \A $ be fixed. The linear functional
$$ f_k:\A \rightarrow \mathbb{C};a \mapsto \varphi_{k-1}(a_1, ...a_{k-1}, a ,a_{k+1}, ..., a_n ) $$ is UWOT-continuous (and so $ \Arrowvert . \Arrowvert_{\tau} $-continuous on $ \A_1 $) and $$\Arrowvert f_k \Arrowvert \leq \max
\{\Arrowvert \varphi_{k-1} \Arrowvert, \Arrowvert a_1 \Arrowvert, ..., \Arrowvert a_{k-1} \Arrowvert, \Arrowvert a_{k+1} \Arrowvert, ..., \Arrowvert a_n \Arrowvert \}. $$
By Kaplansky density theorem, $ f_k $ extends without change of norm to an UWOT-continuous, smooth functional $ \bar{f_k} $ on $ \bar{\A} $.
	
Now we define $ \varphi_k(a_1, ..., a_k, ..., a_n )=\bar{f_k}(a_k) $. Clearly
$ \varphi_k $ is a bounded $ n $-linear  form on $ \underbrace{\bar{\A} \times \bar{\A}\times ... \times \bar{\A}}_{k-times} \times \A \times ... \times \A $
	that extends $ \varphi_{k-1} $ without change of norm and it is UWOT-continuous and smooth in its first $ k^{\text{th}} $ argument. We will show that $ \varphi_k  $ is UWOT-continuous
	and smooth in its other arguments for $ a_k \in \bar{\A}\setminus \A.$ Let $ 1\leq j \leq n $ with $ j \neq k $ and fix $ a_i $ for all $ i \neq j,k $ with $ a_i \in \bar{\A} $ for $ i<k $ or $ a_i \in \A $ for $ i>k $. Let $\B=\bar{\A}  $ if $ j<k $ and $\B=\A  $ if $ j>k $. Let  $ \psi:\A \times \B \rightarrow \mathbb{C} $ be the bounded bilinear form defined by $ \psi(a_k,a_j)=\varphi_{k-1} (a_1, ..., a_n)=\varphi_{k} (a_1, ..., a_n) $. By  assumption on $ \varphi_{k-1} $, $ \psi $ is a separately UWOT-continuous, smooth form so by Lemma \ref{L2} it extends uniquely to a bounded bilinear  smooth form $ \bar{\psi}:\bar{\A} \times \B \rightarrow \mathbb{C} $ which is separately UWOT-continuous. Since both $ \bar{\psi}(a_k,a_j) $ and $ \varphi_k(a_1, ..., a_n) $ are UWOT-continuous in the variable $ a_k \in \bar{\A} $ and they agree on $ \A $, it follows that
	$ \bar{\psi}(a_k,a_j) = \varphi_k(a_1, ..., a_n) $ on $ \bar{\A} \times \B $. This shows that for each $ a_k \in \bar{\A} $, the map $ \varphi_k $ is  UWOT-continuous
	 and smooth in $ a_j \in \B $, because $\bar{\psi}  $ has these properties.
	
	 Case 2. For each $ \xi \in  X_* $
	the bounded $ n $-linear   form $$ \rho_\xi:\A \times\A \times ... \times \A \rightarrow \mathbb{C};(a_1, ..., a_n)\mapsto \langle \varphi (a_1, ..., a_n), \xi \rangle$$
	is smooth and separately UWOT-continuous. Hence, by Case 1, it  extends  uniquely (without change of norm) to a separately UWOT-continuous, smooth $ n $-linear form $\bar{ \rho_\xi} $ on $ \bar{\A} \times\bar{\A} \times ... \times \bar{\A} $. Thus, for every $a_1, ..., a_n\in \bar{\A} $, the map $ \xi \mapsto \bar{ \rho_\xi}(a_1, ..., a_n) $ is a bounded linear functional on $ X_*$ and so belongs to $ X= (X_*)^* $. This defines a map $ \bar{\varphi}$ satisfying $ \Arrowvert \bar{\varphi} \Arrowvert = \Arrowvert \varphi \Arrowvert $. The smoothness and UWOT-continuity of $ \bar{\varphi}$ follows from the smoothness and UWOT-continuity of $\bar{ \rho_\xi} $.
\end{proof}

{\it Proof of Theorem \ref{t1}}. This is immediate by Lemma \ref{L3}, because the restriction map $\mathcal{H}^n_{sw}(\bar{\A},X) \rightarrow  \mathcal{H}^n_{sw}(\mathcal{A},X) $ is an isomorphism.\qed

\begin{rem}\label{r2}
		Let $ \A\subseteq B(\h) $ be a $ C^* $-algebra with a faithful normal trace on the UWOT-closure   $ \bar{\A} $ of $\A$ and let $ X $ be a normal dual $ \bar{\A} $-bimodule. If $ \pi $ is the universal representation of $\A$, then
		it is well known \cite{dix2} that  there is a projection  $ p $ in the center of the UWOT-closure $ \overline{\pi(\A)} $ of $\pi(\A) $ and an isomorphism $ \theta:p\overline{\pi(\A)}\rightarrow \bar{\A} $ such that
		\begin{eqnarray}\label{*}
		\theta(p\pi(a))=a \quad\text{and}\quad \theta(pb)=\pi^{-1}(b) \quad (a \in \A, b \in \overline{\pi(\A)} ).
		\end{eqnarray}
		By \cite[III. 2.2.12]{bl}, $ \theta $ is a homeomorphism in UWOT.
		Therefore,
		 $ X $ may be regarded as a normal dual $ \overline{\pi(\A)} $-bimodule with the following actions inherited from the actions of $ \bar{\A}  $ on $ X, $
		 \begin{eqnarray}\label{1}
		 b \cdot x:= \theta(pb)\cdot x \quad\text{and}\quad x \cdot b :=x \cdot \theta(pb) \quad (x \in X, b \in \overline{\pi(\A)} ).
		 \end{eqnarray}
		
		 In this case, every faithful normal trace $ \tau $ on $ \bar{\A}  $ induces a faithful normal trace $ \tau^\prime $ on $ \overline{\pi(\A)} $ defined by $\tau^\prime(\pi(a))=\tau\theta(p\pi(a)), \ a \in \A, $ such that for each  net $ (a_i)\subseteq \A_1$, $ \Arrowvert  a_i \Arrowvert_\tau\rightarrow 0 $ if and only if $ \Arrowvert  \pi (a_i) \Arrowvert_{\tau^{\prime}}\rightarrow 0 $.
\end{rem}
\begin{prop}\label{p}
	With the assumptions of Remark \ref{r2}, there are bounded linear maps
	$$ T_n:BL^n_{s}(\A,X)\rightarrow BL^n_{sw}(\overline{\pi(\A)},X),$$
	$$ S_n:BL^n_{sw}(\overline{\pi(\A)},X)\rightarrow BL^n_{sw}(\bar{\A},X),$$
    $$ W_n:BL^n_{sw}(\overline{\pi(\A)},X)\rightarrow BL^n_{s}(\A,X),$$	
	such that
	
	(i) $ \delta^n_{sw}T_n=T_{n+1}\delta^n_{s} $ and $ \delta^n_{sw}S_n=S_{n+1}\delta^n_{sw} $ such that the following internal diagrams are commutative:
	\begin{center}
		\begin{tikzpicture}
		\matrix [matrix of math nodes,row sep=1cm,column sep=1cm,minimum width=1cm]
		{
			|(A)| \displaystyle BL^n_{s}(\A,X)  &   |(B)|  BL^{n+1}_{s}(\A,X)    \\
			|(C)|  BL^n_{sw}(\overline{\pi(\A)},X)    &   |(D)|  BL^{n+1}_{sw}(\overline{\pi(\A)},X) \\
		    |(E)|    BL^n_{sw}(\bar{\A},X)   &  |(F)|   BL^{n+1}_{sw}(\bar{\A},X).  \\
		};
		\draw[->]    (A)-- node [above] { $ \delta^n_{s} $}(B);
		\draw[->]   (A)--  node [left] { $ T_n $} (C);
		\draw[->]  (C)-- node [above]   { $\delta^n_{sw} $}(D);
		\draw[->]  (B)-- node [right]  {$ T_{n+1} $}(D);
    	\draw[->]  (C)-- node [left]   { $S^n $}(E);
		\draw[->]  (E)-- node [above]   { $\delta^n_{sw} $}(F);
		\draw[->]  (D)-- node [right]   { $S_{n+1} $}(F);
		\end{tikzpicture}
	\end{center}
	
	(ii) If $ \B $ is a $ C^* $-subalgebra of $ \A $, then $ T_n $ maps $ \B $-modular maps to $\overline{\pi(\B)}$-modular maps and $ S_n $ and $W_n$ map $\overline{\pi(\B)}$-modular maps to maps.
	
	(iii) The map $ S_nT_n $ is a projection from $ BL^n_{s}(\A,X)  $ onto $ BL^n_{sw}(\A,X)  $.
	
	(iv) If $\C  $ is the $ C^* $-algebra generated by $ 1 $ and $ p $, the minimal projection in $ \overline{\pi(\B)} $ with $ \overline{\pi(\B)}\cdot p= \bar{\A}$ discussed in Remark \ref{r2}, and if $ \psi \in  BL^n_{sw}(\overline{\pi(\A)},X:\C) $, then $$ W_n\psi=S_n\psi \in  BL^n_{sw}(\bar{\A},X). $$
	
	(v) $ W_nT_n $ is the identity map on $  BL^n_{s}(\A,X). $
\end{prop}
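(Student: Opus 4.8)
The plan is to build the three maps by transporting cochains through the universal representation and the central projection of Remark~\ref{r2}, and then to verify (i)--(v) by checking each identity on the ultraweakly dense subalgebras $\pi(\A)$ and $\A$ and propagating it by separate UWOT-continuity. I would keep from Remark~\ref{r2} the central projection $p\in\overline{\pi(\A)}$, the UWOT-homeomorphic $*$-isomorphism $\theta\colon p\overline{\pi(\A)}\to\bar{\A}$ with $\theta(p\pi(a))=a$, the actions $b\cdot x=\theta(pb)\cdot x$ and $x\cdot b=x\cdot\theta(pb)$ of $\overline{\pi(\A)}$ on $X$ from (\ref{1}), and the induced faithful normal trace $\tau^{\prime}$ with $\Vert\pi(a)\Vert_{\tau^{\prime}}=\Vert a\Vert_{\tau}$. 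For $a_i\in\A$ and $c_i\in\bar{\A}$ I would set
\[
(W_n\psi)(a_1,\dots,a_n):=\psi(\pi(a_1),\dots,\pi(a_n)),\qquad (S_n\psi)(c_1,\dots,c_n):=\psi(\theta^{-1}(c_1),\dots,\theta^{-1}(c_n)),
\]
where $\theta^{-1}(c)\in p\overline{\pi(\A)}$; and for $T_n$ I would carry $\phi\in BL^n_s(\A,X)$ over to $\pi(\A)$ as $\phi\circ(\pi^{-1})^{\times n}$ and define $T_n\phi$ to be its extension furnished by Lemma~\ref{L3}.

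The step that needs the most care---and the reason the universal representation appears---is seeing that Lemma~\ref{L3} genuinely applies to $\phi\circ(\pi^{-1})^{\times n}$. A merely smooth cochain on $\A$ is in general \emph{not} separately UWOT-continuous for the original ultraweak topology of $\A\subseteq B(\h)$, so Lemma~\ref{L3} cannot be invoked on $\A$ directly. On the universal representation, however, $(\overline{\pi(\A)})_*=\A^{*}$, so the relative ultraweak topology of $\pi(\A)$ is the weak topology $\sigma(\A,\A^{*})$, and hence \emph{every} bounded functional on $\pi(\A)$ is UWOT-continuous; in particular each form $\langle\phi\circ(\pi^{-1})^{\times n}(\cdot),\xi\rangle$ with $\xi\in X_*$ is separately UWOT-continuous. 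Since smoothness transfers through $\Vert\pi(a)\Vert_{\tau^{\prime}}=\Vert a\Vert_{\tau}$, this places $\phi\circ(\pi^{-1})^{\times n}$ under the hypotheses of Lemma~\ref{L3} and yields $T_n\phi\in BL^n_{sw}(\overline{\pi(\A)},X)$. That $S_n\psi$ and $W_n\psi$ are smooth follows because $\theta^{-1}$ and $\pi$ are $\Vert\cdot\Vert_{\tau}$--$\Vert\cdot\Vert_{\tau^{\prime}}$ isometries; $S_n\psi$ is additionally separately UWOT-continuous since $\theta^{-1}$ is a UWOT-homeomorphism, giving $S_n\psi\in BL^n_{sw}(\bar{\A},X)$, whereas $W_n\psi$ lands only in $BL^n_s(\A,X)$ because $\pi$ need not be continuous from the original ultraweak topology of $\A$ into that of $\overline{\pi(\A)}$.

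For (i) I would observe that both $\delta^n_{sw}T_n\phi$ and $T_{n+1}\delta^n_s\phi$ are separately UWOT-continuous on $\overline{\pi(\A)}^{\,n+1}$, and that, using multiplicativity of $\pi$ and the identity $\pi(a)\cdot x=a\cdot x$ coming from (\ref{1}) and (\ref{*}), they agree on $\pi(\A)^{n+1}$; density then forces them equal everywhere. The identity $\delta^n_{sw}S_n=S_{n+1}\delta^n_{sw}$ is simply the statement that $\theta$, which by (\ref{1}) intertwines the two module structures, induces a cochain map. Part (ii) is the same density argument applied to the modularity relations, which persist to the closure because the actions are normal and $\pi(\B)$ is UWOT-dense in $\overline{\pi(\B)}$. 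Part (v) is immediate, since $T_n\phi$ restricts to $\phi\circ(\pi^{-1})^{\times n}$ on $\pi(\A)$, so $(W_nT_n\phi)(a_1,\dots,a_n)=T_n\phi(\pi(a_1),\dots,\pi(a_n))=\phi(a_1,\dots,a_n)$.

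The remaining parts rest on one observation: by (\ref{1}) and (\ref{*}) one has $p\cdot x=\theta(p)\cdot x=x$ and likewise $x\cdot p=x$, i.e. $p$ acts trivially on $X$. For (iv), when $\psi$ is $\C$-modular I would use the $\C$-modularity relations with $b=p$ and the centrality of $p$ to pull each factor $p$ out of $\psi(p\pi(a_1),\dots,p\pi(a_n))$ as a (left or right) $p$-action, which is trivial; this gives $(S_n\psi)(a_1,\dots,a_n)=(W_n\psi)(a_1,\dots,a_n)$ on $\A$, exhibiting $W_n\psi$ as the element $S_n\psi\in BL^n_{sw}(\bar{\A},X)$. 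For (iii), when $\phi\in BL^n_{sw}(\A,X)$ the uniqueness in Lemma~\ref{L3} identifies $T_n\phi(b_1,\dots,b_n)$ with $\bar{\phi}(\theta(pb_1),\dots,\theta(pb_n))$ (both sides are separately UWOT-continuous and agree on $\pi(\A)$), so $S_nT_n\phi$ restricts to $\phi$ on $\A$; thus $S_nT_n$ fixes $BL^n_{sw}(\A,X)$ pointwise, is idempotent, and has range inside it, i.e. it is the claimed projection. The one genuine hurdle is the well-definedness of $T_n$ in the second paragraph; once the universal representation supplies the separate UWOT-continuity that Lemma~\ref{L3} requires, everything else is bookkeeping with the identities (\ref{*}) and (\ref{1}).
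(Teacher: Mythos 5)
Your construction coincides with the paper's: the same $T_n$ (extend $\varphi\circ(\pi^{-1})^{\times n}$ by Lemma \ref{L3}, justified exactly as you do by the fact that every bounded functional on the universal representation $\pi(\A)$ is UWOT-continuous), the same $S_n=(\,\cdot\,)\circ(\theta^{-1})^{\times n}$ and $W_n=(\,\cdot\,)\circ\pi^{\times n}$, and the same density, centrality-of-$p$, and $p\cdot x=x\cdot p=x$ arguments for (i), (ii), (iv), (v). You also supply a uniqueness argument for (iii), which the paper's written proof leaves implicit, and your route through Lemma \ref{L3} is the natural way to fill that in.
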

\begin{proof}
For the projection $p$ as in Remark \ref{r2}, we have  $  p\cdot x=x\cdot p=x $,  for every $ x\in X.$  Also, for each $ b_1,..., b_n \in \pi(\A) $ and $ \varphi \in  BL^n_{s}(\A,X) $ the equality
	\begin{eqnarray}\label{2}
		\varphi_1(b_1,..., b_n)= \varphi(\theta(b_1), ..., \theta(b_n))
	\end{eqnarray}
	defines an element $ \varphi_1\in BL^n_{s}(\pi(\A),X)$. The map $ \varphi_1 $ is smooth because on bounded sets  UWOT agrees with  $  \Arrowvert . \Arrowvert_\tau $-topology and $ \theta $ is a UWOT-continuous homeomorphism. Since $ \pi $ is the universal representation of $ \A $, by \cite[Theorem 2.4]{tak}, each continuous linear functional on $ \pi(\A) $ is UWOT-continuous. Hence, $ \varphi_1 $ is separately UWOT-weak*-continuous, that is, $ \varphi_1\in BL^n_{sw}(\pi(\A),X)$.
	Therefore by Lemma \ref{L3}, $ \varphi_1 $ extends uniquely to some $ \tilde{\varphi}_1\in BL^n_{sw}(\overline{\pi(\A)},X)$ without change of norm. By Remark \ref{r2}, the map $ \tilde{\varphi}_1 $ is smooth. Now we define
	$$ T_n:BL^n_{s}(\A,X)\rightarrow BL^n_{sw}(\overline{\pi(\A)},X)$$
	by $ T_n\varphi=\tilde{\varphi}_1. $ It is easy to see that $ T_n $ is an isometry.
	If $\varphi\in BL^n_{s}(\A,X)  $ and $ b_1,..., b_{n+1} \in \pi(\A) $, then  the definition of $ T_n $ combined with the equations (\ref{1}) and (\ref{2}) yields
	\begin{align*}
		\delta^n_{sw}T_n\varphi(b_1,..., b_{n+1})
		&=\theta (pb_1)\cdot\varphi(\theta(pb_2),...,\theta(pb_{n+1}))      \\
		&+\sum_{j=1}^{n}(-1)^j\varphi(..., \theta(pb_j)\theta(pb_{j+1}),...)\\
		&+(-1)^{n+1}\varphi(\theta(pb_1),...,\theta(pb_n))\cdot\theta(pb_{n+1})\\
		&=\theta (pb_1)\cdot\varphi(\theta(pb_2),...,\theta(pb_{n+1}))      \\
		&+\sum_{j=1}^{n}(-1)^j\varphi(..., \theta(pb_jb_{j+1}),...)\\
		&+(-1)^{n+1}\varphi(\theta(pb_1),...,\theta(pb_n))\cdot\theta(pb_{n+1})\\
		&=T_{n+1}\delta^n_{s}\varphi(b_1,..., b_{n+1}).
	\end{align*}
	We use the fact that $ p $ is a central projection. Both the maps $ \delta^n_{sw}T_n\varphi $ and $ T_{n+1}\delta^n_{s}\varphi $ are separately UWOT-weak*-continuous, hence  $$ \delta^n_{sw}T_n\varphi(b_1,..., b_{n+1})=T_{n+1}\delta^n_{s}\varphi (b_1,..., b_{n+1}), $$  for every $ b_1,..., b_{n+1} \in \overline{\pi(\A)}. $ Thus $ \delta^n_{sw}T_n=T_{n+1}\delta^n_{s} $.
	
	If $ \B $ is a $ C^* $-subalgebra of $ \A $ and $ \varphi\in BL^n_{s}(\A,X:\B) $, then it follows from the equalities  $  p\cdot x=x\cdot p=x, $ for all $ x\in X,$ that
	 $ T_n\varphi \in BL^n_{sw}(\overline{\pi(\A)},X:\overline{\pi(\B)}) $: for instance, if
	$ a_1,..., a_n \in  \A  $ with $ b_j=\pi(a_j) $ and $ b\in \B $, then
	\begin{align*}
		T_n\varphi(b_1,...,b_j\pi(b),b_{j+1},..., b_{n})
		&=\varphi(\theta (pb_1),..., \theta(pb_j\pi(b)),...,\theta(pb_{n}))      \\
		&=\varphi(\theta (pb_1),..., a_jb,a_{j+1},...,\theta(pb_{n}))      \\
		&=\varphi(\theta (pb_1),..., a_j,ba_{j+1},...,\theta(pb_{n}))      \\
		&=\varphi(\theta (pb_1),..., \theta(pb_j),\theta(p\pi(b)b_{j+1}),...,\theta(pb_{n}))      \\
		&=T_n\varphi(b_1,...,b_j,\pi(b)b_{j+1},..., b_{n}).
	\end{align*}
	By the UWOT-weak*-continuity of the maps involved, the above calculation holds for each $ b_j\in \overline{\pi(\A)}.$ The calculation of the other cases is similar.
	
Next we define the map $ S_n $. For every $ \psi \in BL^n_{sw}(\overline{\pi(\A)},X), $ define $$ S_n (\psi)(a_1,..., a_{n})=\psi(\theta^{-1}(a_1), ..., \theta^{-1}(a_n)) \;\ (a_i\in \bar{\A}). $$
 	Since $ \psi $ and $ \theta^{-1} $ are UWOT-continuous, $ S_n\psi $ is normal and  Remark \ref{r2} implies that it is a smooth map. Hence, $ S_n $ maps
	$ BL^n_{sw}(\overline{\pi(\A)},X) $ into $ BL^n_{sw}(\bar{\A},X) $ and $ \Arrowvert S_n \Arrowvert \leq 1. $ By (\ref{*}), $ \theta(p\theta^{-1}(a))=a $,
	$ \theta^{-1}(a)\cdot x =\theta(p\theta^{-1}(a)) \cdot x =a \cdot x $ and
	$ x \cdot \theta^{-1}(a)=x \cdot a $ for all $ a \in \A $ and $ x \in X. $
	Hence,
	\begin{align*}
		S_{n+1}\delta^n_{sw}\psi(a_1,..., a_{n+1})
		&=\delta^n_{sw}\psi(\theta^{-1}(a_1),...,\theta^{-1}( a_{n+1}) )    \\
		&=a_1\cdot\psi(\theta^{-1}(a_2),...,\theta^{-1}( a_{n+1}) )    \\
		&+\sum_{j=1}^{n}(-1)^j\psi(\theta^{-1}(a_1),...,\theta^{-1}(a_j a_{j+1}),..., \theta^{-1}( a_{n+1}))\\
		&+(-1)^{n+1}\psi  (\theta^{-1}(a_1),...,\theta^{-1}( a_{n}) )\cdot a_{n+1}    \\
		&=\delta^n_{sw}S_n\psi(a_1,..., a_{n+1}),
	\end{align*}
	for every $ a_1,..., a_{n+1}\in \A $. By the normality of the maps involved, the equality holds on $ \bar{\A} $, that is, $ \delta^n_{sw}S_n=S_{n+1}\delta^n_{sw} $. Clearly $ S_n\psi $ is a $ \bar{\A} $-module map, whenever $ \psi $ is a $ \overline{\pi(\B)} $-module map.
	
	The map $ W_n:BL^n_{sw}(\overline{\pi(\A)},X)\rightarrow BL^n_{s}(\A,X), $ defined by $ W_n\psi(a_1,..., a_{n})= \psi (\pi (a_1),..., \pi ( a_n)) $ is a continuous linear map with $ \Arrowvert  W_n \Arrowvert \leq 1 $. Note that by Remark \ref{r2}, the smoothness of $ \psi \in BL^n_{sw}(\overline{\pi(\A)},X)$ implies the smoothness of $ W_n\psi $.
	
	 If $ \varphi \in BL^n_{s}(\A,X) $, then by  (\ref{*}) and  (\ref{2}),
	\begin{align*}
		W_nT_n\varphi(a_1,..., a_{n})
		&=T_n\varphi(\pi (a_1),..., \pi (a_{n}))      \\
		&=\varphi(\theta (p\pi (a_1)),...,\theta(p\pi (a_{n})))      \\
		&=\varphi(a_1,..., a_{n}),
	\end{align*}
	which proves $ (v) $.

To prove $ (iv) $, let $ \psi\in  BL^n_{sw}(\overline{\pi(\A)},X:\C) $. Since $ p^2=p $ in the center of $ \overline{\pi(\A)} $	and $ \psi $ is a $ \C $-module map, we have
	\begin{align*}
		W_n\psi(a_1,..., a_{n})
		&=\psi(\pi (a_1),..., \pi (a_{n}))      \\
		&=\psi(\pi (a_1),..., \pi (a_{n})) \cdot p  \quad \quad\quad( \hbox{since}\; p\cdot x=x\cdot p=x )   \\
		&=\psi(\pi (a_1)p,..., \pi (a_{n})p) \\
		&=\psi(\theta^{-1} (a_1),..., \theta^{-1} (a_{n})) \quad \quad( \hbox{by}\: (\ref{*}), \;\: \theta^{-1} (a_i)=\pi (a_i)p)\\
		&=S_n\psi(a_1,..., a_{n}),
	\end{align*}
	as required. This finishes the proof.
	\end{proof}
	
\begin{lem}\label{5}
	Let $ \A\subseteq B(\h) $ be a $ C^* $-algebra with a faithful normal trace on   $ \bar{\A} $ and let $ X $ be a normal dual $ \bar{\A} $-bimodule. Then there is a bounded linear map $ J_n:BL^n_{s}(\A,X)\rightarrow BL^{n-1}_{s}(\A,X) $ with the following properties;
	
	(i) $ \Arrowvert J_n \Arrowvert \leq ((n+2)^n-1)/(n+1) $,
	
	(ii) if $ \varphi \in BL^n_{s}(\A,X) $ with $\delta^n_{s} \varphi=0, $ then
	$ \varphi-\delta^{n-1}_{s}J_n\varphi \in BL^n_{sw}(\A,X) $,
	
	(iii) if $ \B $ is a $ C^* $-subalgebra of $ \A $, then $ J_n $ maps $ BL^n_{s}(\A,X:\B ) $ into \\
	$ BL^{n-1}_{s}(\A,X:\B ) $.
	\end{lem}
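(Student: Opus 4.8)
The plan is to transport the Johnson--Kadison--Ringrose normalization procedure \cite{jkr}, in the form of \cite[Section~3]{SS}, to the smooth category: I will define $J_n$ from the contracting homotopy for the conjugation action of unitaries and then average over the unitary groups of finite-dimensional subalgebras of $\A$. The output $\varphi-\delta^{n-1}_sJ_n\varphi$ will be the average of $\varphi$, which is modular and hence, in the smooth setting, normal.

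First I would set up the homotopy. For a unitary $u\in\A$, conjugation induces the operator $(u\cdot\varphi)(a_1,\dots,a_n)=u\,\varphi(u^*a_1u,\dots,u^*a_nu)\,u^*$ on cochains, commuting with $\delta^n$. Since $\mathrm{Ad}_u$ is inner, $u\cdot(-)$ is chain homotopic to the identity: there is an explicit $(n-1)$-cochain $h_u\varphi$, a signed sum of terms in which $u$ and $u^*$ are inserted among the arguments, with $\delta^{n-1}h_u\varphi+h_u\delta^n\varphi=u\cdot\varphi-\varphi$. Each $h_u\varphi$ is smooth, since $\varphi$ is smooth and left/right multiplication by the fixed unitaries $u,u^*$ is $\|\cdot\|_\tau$-isometric on $\A_1$; thus $h_u$ maps $BL^n_s$ into $BL^{n-1}_s$ and preserves $\B$-modularity, because it is built only from the bimodule operations and from insertion of unitaries, which commute with multiplication by $\B$ on the outer arguments. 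This gives (iii).

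Next I would average and pass to the limit. For a finite-dimensional $*$-subalgebra $F\subseteq\A$ with compact unitary group $U(F)$ and normalized Haar measure $du$, the integrals below exist as norm-convergent weak integrals, because on bounded balls the relevant topologies coincide. Averaging the homotopy identity and using $\int_{U(F)}1\,du=1$ gives $\varphi^F-\varphi=\delta^{n-1}_s\big(\int_{U(F)}h_u\varphi\,du\big)$, where $\varphi^F:=\int_{U(F)}u\cdot\varphi\,du$ is invariant under $\mathrm{Ad}_v$ for every $v\in U(F)$, hence $F$-modular. Setting $J_n^F\varphi:=-\int_{U(F)}h_u\varphi\,du$ we get $\varphi-\delta^{n-1}_sJ_n^F\varphi=\varphi^F$ whenever $\delta^n_s\varphi=0$. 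Finally I would take a limit of $J_n^F\varphi$ along the net of finite-dimensional subalgebras directed by inclusion, using weak-$*$ compactness of balls in the dual space $BL^n(\A,X)$; the limit $J_n\varphi$ is linear, bounded, and smooth, and $\varphi-\delta^{n-1}_sJ_n\varphi$ is modular over $\bigcup_FF$, which yields (ii).

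The norm estimate (i) follows from the term count in the iterated construction of \cite[Section~3]{SS}, carried out one argument at a time: the contributions assemble into the geometric sum $\sum_{k=0}^{n-1}(n+2)^k=((n+2)^n-1)/(n+1)$, and averaging never increases norms. The main obstacle is the last step, namely showing that the limiting average is genuinely separately UWOT-continuous (and still smooth, with the stated bound), since for a general finite von Neumann algebra the finite-dimensional subalgebras of $\A$ need not be ultraweakly dense. This is exactly where the hypotheses are essential: the faithful normal trace forces $\|\cdot\|_\tau$ to agree with the ultraweak topology on bounded balls, so that the conjugation-invariance of $\varphi^F$, combined with smoothness, upgrades to honest normality in the limit, precisely as in \cite{SS}.
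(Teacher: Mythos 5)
Your proposal diverges from the paper's argument at the decisive point, and the divergence is a genuine gap rather than an alternative route. The paper's proof (the Johnson--Kadison--Ringrose reduction to normal cohomology, in the form of \cite[Lemma 3.2.4]{SS}) does not average over unitary groups of finite-dimensional subalgebras of $\A$. It first passes to the universal representation $\pi$ and the central projection $p\in\overline{\pi(\A)}$ of Remark \ref{r2}, transports $\varphi$ there via the map $T_n$ of Proposition \ref{p} --- this is where smoothness gets converted into separate UWOT-weak*-continuity, because every bounded functional on $\pi(\A)$ is automatically ultraweakly continuous in the universal representation --- and then averages over the \emph{two-element} unitary group $\{1,2p-1\}$ to obtain $K_n$ with $(I-\delta^{n-1}_{sw}K_n)\psi$ modular over $\C=C^*(1,p)$ for every cocycle $\psi$. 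Setting $J_n=W_{n-1}K_nT_n$, the identity $\varphi-\delta^{n-1}_{s}J_n\varphi=W_n(T_n\varphi-\delta^{n-1}_{sw}K_nT_n\varphi)$ together with Proposition \ref{p}(iv) (on $\C$-modular maps $W_n$ coincides with $S_n$, and $S_n\psi$ is normal by construction through $\theta^{-1}$) yields (ii). The $\C$-modularity is precisely the mechanism that produces UWOT-continuity: a $\C$-modular cochain satisfies $\psi(b_1,\dots,b_n)=\psi(b_1p,\dots,b_np)$, and $b\mapsto\theta(pb)$ is a normal isomorphism onto $\bar{\A}$. Your construction contains no analogue of this mechanism.

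Concretely, two things fail in your scheme. First, modularity of $\varphi^F$ over finite-dimensional subalgebras $F\subseteq\A$, or over $\bigcup_F F$, has no bearing on separate UWOT-weak*-continuity of the limit; you flag this obstacle yourself, but your resolution (``conjugation-invariance combined with smoothness upgrades to normality in the limit'') is an assertion, not an argument --- and if smoothness alone implied normality, the lemma would be vacuous because $BL^n_{s}(\A,X)$ would already equal $BL^n_{sw}(\A,X)$. Second, a $C^*$-algebra with a faithful trace need not contain any finite-dimensional $*$-subalgebra beyond $\mathbb{C}1$ (for instance $C^*_r(F_2)$, which has no nontrivial projections), in which case your directed net is trivial, $J_n=0$, and (ii) collapses to the unproved claim that every smooth cocycle is normal. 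Your norm count $\sum_{k=0}^{n-1}(n+2)^k=((n+2)^n-1)/(n+1)$ is the correct arithmetic for the iterated one-variable-at-a-time construction of $K_n$ in \cite[Lemma 3.2.4]{SS}, but in the paper that lemma is applied to the group $\{1,2p-1\}$ inside $\overline{\pi(\A)}$, not to $U(F)$ inside $\A$; the average over increasing finite-dimensional subalgebras that you describe is the technique for vanishing theorems over hyperfinite algebras, not for the normalization statement being proved here.
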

\begin{proof}
	Let $ \pi $ be the universal representation of $ \A $ and $ p $ be the central projection in $ \overline{\pi(\A)} $ as in Remark \ref{r2}. The unitary subgroup consisting of the two elements $ \{1, 2p-1\} $ generates a two dimensional
	$ C^* $-subalgebra  $ \C $ in the center of $ \overline{\pi(\A)} $. By the averaging techniques similar to \cite[Lemma 3.2.4(a)]{SS}, there is a continuous linear map $ K_n:BL^n_{sw}(\overline{\pi(\A)},X)\rightarrow BL^{n-1}_{sw}(\overline{\pi(\A)},X)$ such that $ (I-\delta^{n-1}_{sw}K_n)\psi $ is a
	$ \C $-module map, for each $ \psi \in BL^n_{sw}(\overline{\pi(\A)},X) $ with
	$ \delta^{n}_{sw}\psi=0 $.
	
	Let $ T_n, W_n $ be as in Proposition \ref{p}. Define
	$$ J_n:BL^n_{s}(\A,X)\rightarrow BL^{n-1}_{s}(\A,X) $$ by $ J_n=W_{n-1}K_nT_n, $
	then the following diagram is commutative:
	\begin{center}
		\begin{tikzpicture}
		\matrix [matrix of math nodes,row sep=1cm,column sep=1cm,minimum width=1cm]
		{
			|(A)| \displaystyle BL^n_{s}(\A,X)  &   |(B)| BL^{n}_{sw}(\overline{\pi(\A)},X)    \\
			|(C)|  BL^{n-1}_{s}(\A,X)    &   |(D)|  BL^{n-1}_{sw}(\overline{\pi(\A)},X). \\
				};
		\draw[->]    (A)-- node [above] { $ T_n $}(B);
		\draw[->]   (A)--  node [left] { $ J_n $} (C);
		\draw[<-]  (C)-- node [above]   { $W_{n-1} $}(D);
		\draw[->]  (B)-- node [right]  {$ K_n $}(D);
		
				\end{tikzpicture}
	\end{center}
	By \cite[Lemma 3.2.4]{SS}, $ \Arrowvert J_n \Arrowvert \leq ((n+2)^n-1)/(n+1),  $ and by Proposition \ref{p}, $ J_n $ takes $\B  $-module maps to $\B  $-module maps, and this proves (i) and (iii). Since $ W_nT_n $ is the identity on $ BL^n_{s}(\A,X) $, the equation $ \delta^{n-1}_{s}W_{n-1}= W_n\delta^{n-1}_{sw} $ implies that
	$$ \varphi-\delta^{n-1}_{s}J_n\varphi=\varphi-\delta^{n-1}_{s}W_{n-1}K_nT_n\varphi=W_n(T_n\varphi-\delta^{n-1}_{sw}K_nT_n\varphi). $$
	Now Proposition \ref{p}(i) implies that $ \delta^{n}_{sw}T_n\varphi = T_{n+1}\delta^{n}_{s}\varphi=0 $. Hence, $ T_n\varphi-\delta^{n-1}_{sw}K_nT_n\varphi $ is a $ \C $-module map.
Proposition \ref{p}(iv)	asserts that $ W_n $ takes $ \C $-module smooth maps
 to smooth normal maps, so $ \varphi-\delta^{n-1}_{s}J_n\varphi $ is a smooth normal map. This completes the proof.
\end{proof}

{\it Proof of Theorem \ref{t2}}. 	Consider the natural embedding
	$$ Q_n: BL^{n}_{sw}(\A,X)\rightarrow BL^n_{s}(\A,X). $$
	If $ \varphi\in BL^{n}_{sw}(\A,X) $ with $ \varphi=\delta^{n-1}_{s}\psi $ for some $ \psi\in BL^{n-1}_{s}(\A,X) $, then by Proposition \ref{p}(i) and (iii), $ \varphi=S_nT_n \delta^{n-1}_{s}\psi=\delta^{n-1}_{sw}S_{n-1}T_{n-1}\psi $. Therefore, $ Q_n $ induces an injective map $\tilde{Q}_n: \mathcal{H}^n_{sw}(\mathcal{A},X)\rightarrow\mathcal{H}^n_{s}(\A,X), $ which is surjective by Lemma \ref{5}. Hence, $ \mathcal{H}^n_{sw}(\mathcal{A},X)=\mathcal{H}^n_{s}(\A,X). $\qed

\vspace{.3cm}
 Theorems \ref{t1} and \ref{t2} yield the following result.

\begin{cor}\label{c1}
		Let $ \A\subseteq B(\h) $ be a $ C^* $-algebra with a faithful normal trace on   $ \bar{\A} $ and let $ X $ be a normal dual $ \bar{\A} $-bimodule. Then, for every $ n \in  \mathbb{N}$ we have
		$$ \mathcal{H}^n_{s}(\mathcal{A},X)=\mathcal{H}^n_{sw}(\bar{\A},X). $$
	\end{cor}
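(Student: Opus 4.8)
The plan is to obtain the corollary purely formally, by composing the two isomorphisms supplied by Theorems \ref{t1} and \ref{t2}, which share the common intermediate group $\mathcal{H}^n_{sw}(\A,X)$. Fix $n \in \mathbb{N}$. The first ingredient is Theorem \ref{t2}, which asserts $\mathcal{H}^n_{sw}(\A,X)=\mathcal{H}^n_{s}(\A,X)$; concretely this identity is realized by the natural inclusion $Q_n\colon BL^n_{sw}(\A,X)\to BL^n_{s}(\A,X)$, which, as shown in the proof of that theorem, descends to an isomorphism $\tilde{Q}_n\colon \mathcal{H}^n_{sw}(\A,X)\to\mathcal{H}^n_{s}(\A,X)$ of cohomology groups. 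The second ingredient is Theorem \ref{t1}, which gives $\mathcal{H}^n_{sw}(\A,X)=\mathcal{H}^n_{sw}(\bar{\A},X)$, realized by the restriction map $\mathcal{H}^n_{sw}(\bar{\A},X)\to\mathcal{H}^n_{sw}(\A,X)$ whose inverse is the extension operation furnished by Lemma \ref{L3}.

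First I would record that both theorems express an equality in which $\mathcal{H}^n_{sw}(\A,X)$ appears, and that this is literally the same cochain complex and the same cohomology group in each case, namely that of the smooth, separately UWOT-continuous cochains on $\A$. Then I would chain the two identifications: starting from $\mathcal{H}^n_{s}(\A,X)$, apply the inverse of $\tilde{Q}_n$ to land in $\mathcal{H}^n_{sw}(\A,X)$, and then apply the extension isomorphism of Theorem \ref{t1} to reach $\mathcal{H}^n_{sw}(\bar{\A},X)$. The composite is an isomorphism, yielding $\mathcal{H}^n_{s}(\A,X)=\mathcal{H}^n_{sw}(\bar{\A},X)$ for each $n$, which is exactly the assertion.

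There is no genuine obstacle here: the substantive content resides entirely in Theorems \ref{t1} and \ref{t2}, together with the supporting Proposition \ref{p} and Lemmas \ref{L3} and \ref{5}. The only point worth verifying explicitly is the compatibility of the two isomorphisms at the shared middle term, i.e.\ that $Q_n$ and the restriction map operate on one and the same space $BL^n_{sw}(\A,X)$ of smooth separately UWOT-continuous $n$-cochains; this is immediate from the definitions, so no diagram-chase is needed. I would therefore present the proof as a one-line composition of equalities, displaying the chain $\mathcal{H}^n_{s}(\A,X)=\mathcal{H}^n_{sw}(\A,X)=\mathcal{H}^n_{sw}(\bar{\A},X)$ and citing Theorems \ref{t2} and \ref{t1} respectively for the two steps.
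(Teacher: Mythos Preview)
Your proposal is correct and mirrors the paper's own treatment exactly: the corollary is stated as an immediate consequence of Theorems \ref{t1} and \ref{t2}, obtained by chaining $\mathcal{H}^n_{s}(\A,X)=\mathcal{H}^n_{sw}(\A,X)=\mathcal{H}^n_{sw}(\bar{\A},X)$. Your added remarks about the concrete realizations via $\tilde{Q}_n$ and the restriction/extension maps are accurate but more detailed than what the paper records.
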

	A. Galatan and S. Popa in \cite{SS} showed that for a von Neumann algebra $ \mathcal{M} $ with a faithful normal trace $ \tau $ and a normal dual operatorial $ \mathcal{M}$-bimodule  $ X $ we have
	\begin{eqnarray}\label{popa}
		\mathcal{H}^1_{s}(\mathcal{M},X)=\mathcal{H}^1(\mathcal{M} ,X_{\tau}).
	\end{eqnarray}

	A Banach $ \mathcal{M}$-bimodule $ X $ is called operatorial if for  every projection $ p \in \mathcal{M}$ and $ x \in X $,
	$$ \Arrowvert p\cdot x\cdot p + (1-p)\cdot x \cdot (1-p) \Arrowvert = \max \{  \Arrowvert  p\cdot x\cdot p  \Arrowvert , \Arrowvert (1-p)\cdot x \cdot (1-p) \Arrowvert\}.$$
	
	By \cite[Proposition  2.2]{gp}, every smooth derivation of a von Neumann algebra $\mathcal{M}  $ to a  dual $ \mathcal{M} $-bimodule is normal, that is, UWOT-weak*-continuous. Therefore, $ \mathcal{H}^1_{s}(\mathcal{M},X)=\mathcal{H}^1_{sw}(\mathcal{M},X) $. Hence, combining \cite[Theorem  3.5]{gp} with  (\ref{popa})
	yields
	\begin{eqnarray}\label{popa2}
		\mathcal{H}^1_{sw}(\mathcal{M},X)=\mathcal{H}^1_{w}(\mathcal{M} ,X_{\tau}).
	\end{eqnarray}
	
	We  use this fact to prove the main  result of this paper.

\vspace{.3cm}
{\it Proof of Theorem \ref{main}}.
	\begin{align*}
		\mathcal{H}^1_{s}(\mathcal{A},X)
		&=\mathcal{H}^1_{sw}(\bar{\A},X)  \quad \quad\quad( \hbox{by Corollary \ref{c1} })    \\
		&=\mathcal{H}^1_{w}(\bar{\A} ,X_{\tau})  \quad \quad\quad( \hbox{by  (\ref{popa2})})   \\
		&=\mathcal{H}^1(\A ,X_{\tau}) \,\quad \quad\quad( \hbox{by \cite[Theorem  3.3.1]{SS} }).\qed
	\end{align*}

We don't know if the Banach $ \mathcal{A}$-bimodule $B(\mathcal{A}, X)$ of bounded $\mathcal{A}$-bimodule maps from $\mathcal{A}$ to an operatorial Banach $ \mathcal{A}$-bimodule $ X $ is again  operatorial. If this is the case, by a standard reduction of order argument for cohomologies, one could conclude that $\mathcal{H}^n_{s}(\mathcal{A},X)
		=\mathcal{H}^n(\A ,X_{\tau})$, for each $n\geq 1$.

\end{document}